\newtheorem{theorem}{Theorem}
\newtheorem{lemma}{Lemma}
\newtheorem{definition}{Definition}
  \newtheorem{Main Results}[theorem]{MainResults}
\newtheorem{proposition}[theorem]{Proposition}
 \newtheorem{remark}{Remark}
  \newtheorem*{assumption}{Assumption}
\begin{document}

\title{Derivation of Fokker-Planck equations for stochastic dynamical systems  under excitation of multiplicative non-Gaussian white noise}
 \author{\small{\it{Xu Sun$^{1,2,}$, Jinqiao Duan$^2$,  Xiaofan Li$^2$,  Hua Liu$^1$, Xiangjun Wang$^1$, and Yayun Zheng$^1$}}\\
 \\\small{$^1$Huazhong University of Science and Technology,
  Wuhan 430074, Hubei, China}
   \\\small{$^2$Illinois Institute of Technology,
  Chicago, IL 60616, USA}\\
  }

\date{March 1, 2016}
\maketitle

\begin{abstract}
Fokker-Planck equations describe  time evolution of probability densities of stochastic dynamical systems and play  an important role in quantifying propagation and evolution of uncertainty. Although Fokker-Planck equations can be written explicitly for nonlinear dynamical systems excited by Gaussian white noise, they are not available in general for nonlinear dynamical systems excited by  multiplicative non-Gaussian white noise. Marcus stochastic differential equations are often appropriate models in engineering and physics for stochastic dynamical systems excited by non-Gaussian white noise. In this paper, we derive explicit forms of Fokker-Planck equations for  one dimensional systems modeled by Marcus stochastic differential equations under multiplicative non-Gaussian white noise. As examples to illustrate the theoretical results, the derived formula is used to obtain Fokker-Plank equations for nonlinear dynamical systems under excitation of (i) $\alpha$-stable white noise; (ii) combined Gaussian and Poisson white noise, respectively.

\textbf{Keywords}: Fokker-Planck equations, non-Gaussian white noise, L\'evy processes,  Marcus SDEs, Marcus Integral

\end{abstract}

\maketitle

\pagestyle{plain}

\section{Introduction and statement of the problem}  \label{intro}

Stochastic differential equations(SDEs) are often appropriate models for dynamical systems subjected to random excitations \cite{Oksendal2003}.  Fokker-Planck equation describes the evolution of probability density functions and is an important tool to study how uncertainties propagate and evolve in dynamical systems \cite{Oksendal2003,Klebaner2005}. Nonlinear dynamical systems excited by Gaussian white noise are often modeled by SDEs driven by Brownian motions (or Wiener processes). For    SDEs driven by Brownian motions, there are explicit formulas to obtain the associated Fokker-Planck equations, regardless the SDEs are in sense of It\^{o} or Stratonovich \cite{Oksendal2003,Klebaner2005}.

Nonlinear dynamical systems excited by non-Gaussian white noise are often modeled by SDEs driven by non-Gaussian L\'{e}vy processes. For SDEs driven by  non-Gaussian L\'evy processes, there are two popular definitions, i.e., SDEs defined in sense of It\^{o} and those defined in sense of Marcus  \cite{Marcus1978, Marcus1981, KurtzPardouxProtter1995, Applebaum2009}.
Marcus SDEs are  often appropriate models in engineering and scientific practice \cite{Marcus1978, Marcus1981, KurtzPardouxProtter1995, Applebaum2009}. It is recently shown \cite{SunDuanLi2013,KanazawaSagawaHayakawa2012,SunDuanLi2016} that  Marcus SDE is equivalent to the well known DiPaola-Falsone SDE \cite{DiPaolaFalsone1993,DiPaolaFalsone1993b} which is widely used in engineering and physics. Comparison of Marcus integral and Stratonovich integral is recently discussed in \cite{ChechkinPavlyukevich2014} for systems with jump noise.

Solutions of both It\^{o} and Marcus SDEs are Markov processes \cite{Applebaum2009}. It is well known that Fokker-Planck equations for Markov processes contain  the adjoint operator of the infitesimal generator of the Markov processes\cite{Applebaum2009}.  Unlike the Gaussian cases, Fokker-Planck equations  for SDEs driven by  non-Gaussian L\'{e}vy processes are not readily available due to the difficulty in obtaining the explicit expressions for the adjoint  of the infinitesimal generators  associated with these SDEs \cite{Applebaum2009}.  While Fokker-Planck equations for It\^{o} SDEs have been discussed by many authors, see \cite{SunDuan2012,Schertzer2001} among others, the research on Fokker-Planck equations for Marcus SDEs is relatively rare. The only published result we can find so far about Fokker-Planck equations for Marcus SDEs is presented in \cite{SunDuan2012}, where an explicit form of Fokker-Planck equations is derived for Marcus SDEs under the condition that coefficients of the noise terms are strictly nonzero. However, it is still not clear what the Fokker-Planck equations are like for Marcus stochastic differential equations under general conditions.

L\'{e}vy processes are  stochastic processes with properties of independent  and stationary increments, as well as  stochastically continuous sample paths \cite{Applebaum2009, Sato1999}. Examples of L\'{e}vy processes include Brownian motions, compound Poisson processes, $\alpha$-stable processes and so on.   A one-dimensional  L\'{e}vy process $L(t)$, taking values in $\mathbb{R}$, is
characterized by a drift   $b\in\mathbb{R}$ , a
 positive real number $A$ and a    measure $\nu$ defined on
$\mathbb{R}$ and concentrated on $\mathbb{R}\backslash\{0\}$. In fact, this measure $\nu$
  satisfies the following condition \cite{Applebaum2009}
\begin{align}\label{s1_1}
\int_{\mathbb{R}\backslash \{0\} }(y^2\wedge 1)\nu({\rm d}y)<\infty,
\end{align}
where $y^2 \wedge 1$  represents the minimum of $y^2$ and $1$. This measure $\nu$ is  called a L\'{e}vy jump measure for  the
L\'{e}vy process $L(t)$. A L\'{e}vy process with the generating
triplet $(b, A, \nu)$ has the L\'{e}vy-It\^{o} decomposition
\begin{align}\label{s1_4}
{\rm d} L(t) =b{\rm d} t+{\rm d} B(t)+ \int_{ |y |< 1}y\tilde{N}({\rm d} t, {\rm d} y)+\int_{ |y |\geq 1}y{N}({\rm d} t,
{\rm d} y).
\end{align}
where $N({\rm d}t, {\rm d}x)$ is the Poisson random measure, $\tilde{N}({\rm d}t,
{\rm d}x)=N({\rm d}t, {\rm d}x)-\nu({\rm  d}x){\rm d}t$  is the compensated Poisson random
measure, and $B(t)$ is  the Brownian motion (i.e., Wiener process) with
variance $A$.

Different kinds of L\'{e}vy processes can be obtained by taking different values of the triplet $(b, A, \nu)$. Just as a Gaussian white noise can be regarded as the formal derivative of a Brownian motion process, a non-Gaussian white noise can be regarded as  the formal derivative of some non-Gaussian L\'evy process.

We shall consider stochastic dynamical systems described by the following  SDE
 in sense of Marcus,
\begin{equation}\label{s1_5}
{\rm d}X(t)=f(X(t)){\rm d}t+\sigma(X(t) )\diamond {\rm d}L(t),
\end{equation}
where $X(t)$ is a scalar process, and $L(t)$ is the one-dimensional L\'{e}vy process with the generating triplet     $(b, A, \nu)$.
Note that we only consider one-dimensional case in this paper.
The solution of equation  (\ref{s1_5})    is interpreted as
\begin{align}\label{s1_6}
X(t) =X(t) + \int_0^t f(X(s) ){\rm d}s+ \int_0^t \sigma(X(s-))\diamond {\rm d} L(s),
 \end{align}
where $X(s-) =\lim\limits_{u<s, u\to s} X(u)$, and "$\diamond$" indicates Marcus integral \cite{Marcus1978, Marcus1981, Applebaum2009}   defined by
\small
\begin{align}\label{s1_7}
 \int_0^t \sigma(X_{s-})\diamond {\rm d} L(s)&=
\int_0^t \sigma(X_{s-})  {\rm d}L(s) + \frac{A}{2} \int_0^t \sigma(X(s-) ) \sigma^\prime (X(s-)) {\rm d} s\nonumber\\
 &\quad + \sum_{0\leq s \leq t} \left[ \xi(\Delta L(s),  X(s-)) -X(s-) -\sigma(X(s-))\Delta L(s) \right],
 \end{align}
 \normalsize
 with $ \xi(r,   x)$   being the value at $z=1$ of the solution of the following ordinary differential equation(ODE):
\begin{align}\label{s1_8}
\frac{\rm d}{{\rm d} z} y(z) =r\sigma(y(z)), \quad\quad y(0)=x.
\end{align}
The first term at the right hand side of (\ref{s1_7}) is the It\^{o} integral, and the second term is the correction term due to the continuous component of $L(t)$, as also appears in Stratonovich integral, and the last term is the correction term due to jumps.  Note that the correction term due to  jumps  is expressed as the sum of some recursively infinite series in DiPaola-Falsone SDEs \cite{DiPaolaFalsone1993,DiPaolaFalsone1993b}, which have been extensively used in engineering and physics. Since it has been shown that Marcus SDEs and DiPaola-Falsone SDEs are essentially equivalent \cite{SunDuanLi2013}, the result for Marcus SDEs in this paper is also true for DiPaola-Falsone SDEs. For more discussion on the relationship between Marcus and DiPaola-Falsone SDEs, readers are referred to \cite{SunDuanLi2013,SunDuanLi2016}.

In this paper, we shall derive explicit forms of Fokker-Planck equations which govern the probability density functions for the solution of the SDE (\ref{s1_5}).  The result presented here is applicable under much more general conditions than that in \cite{SunDuan2012}. While the coefficient  of the noise term is required to be strictly nonzero in \cite{SunDuan2012}, the result here allows the coefficient $\sigma$ to have finite or countable zeros.

Let $p(x,t\big | X(0)=x_0)$ represent the probability density function for the solution $X(t)$ of the SDE (\ref{s1_5}), and for convenience, we drop off the initial condition and simply use $p(x,t)$ instead of $p(x,t\big | X(0)=x_0)$. Throughout this paper, we assume the following.

\begin{assumption}[H1]
The probability density function $p(x,t)$ for the solution $X(t)$ of (\ref{s1_5}) exists, and $p(x,t)$ is continuously differentiable with respect to $t$ and twice continuously differentiable with respect to $x$.
\end{assumption}

We are not going to present conditions for existence and regularity of the solution and the probability density associated with the SDE (\ref{s1_5}), which is out of the scope of this paper.  Note that the existence and regularity of probability density for solutions of SDEs driven by L\'evy processes are  active research topics itself.

The sections of the paper are organized as follows. In subsection 2.1, we  derive Fokker-Planck equations for  systems modeled by (\ref{s1_5}) with $\sigma(x)\ne 0$. The  condition that $\sigma(x) \ne 0$ is relaxed in section subsection 2.2 by assuming $\sigma(x)$ has finite zeros. In section 3, we apply the theoretical result presented in section 2 to obtain the Fokker-Planck equations for some nonlinear dynamical systems under excitation of $\alpha$-stable white noise, or combined Gaussian and Poisson white noise.  Section 4 is the conclusion.

\section{Derivation of Fokker-Planck equation}
\label{sec2}

We derive Fokker-Planck equations for the SDE (\ref{s1_5}) in two cases: (i) $\sigma$ has no zero, and (ii) $\sigma$ has zeros.

\subsection{Cases where $\sigma$ has no zero}
In this subsection, we derive Fokker-Planck equations  for  the SDE (\ref{s1_5}) with a different approach from that in  \cite{SunDuan2012}. The advantage of the approach here lies in that it can be modified to be applicable in cases where $\sigma(x)$ has zeros.

\begin{definition}
The transform $H$ associated with the coefficient $\sigma$ in the SDE (\ref{s1_5})   is defined as
\begin{align}\label{s1_10}
H(x) = \int_a^x \frac{{\rm d}t}{\sigma(t)},
\end{align}
where $a$ is any   constant.
\end{definition}

\begin{lemma}\label{lemma1}
Suppose $\sigma$ is Lipschitz and has no zero, then the transform $H$ has the following properties:\\
(i) $H$ is well defined and monotone on $(-\infty,\infty)$;\\
(ii) $H$ is bijective (i.e., one-to-one and onto) and maps from $(-\infty,\infty)$ to $(-\infty,\infty)$;\\
(iii) $H$ has the inverse transform $H^{-1}$, and $H^{-1}$ is bijective and maps from $(-\infty,\infty)$ to $(-\infty,\infty)$;\\
(iv) $\forall y\in \mathbb{R}$, $H^{-1}(H(\cdot)+y)$  is bijective  and maps from $(-\infty,\infty)$ to $(-\infty,\infty)$.
\end{lemma}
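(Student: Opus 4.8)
The plan is to establish the four properties in order, putting essentially all the work into (ii); properties (iii) and (iv) then follow purely formally. Throughout I would use that a Lipschitz function is in particular continuous. First, for (i): since $\sigma$ is continuous and never vanishes, the intermediate value theorem forces $\sigma$ to keep a constant sign on all of $\mathbb{R}$, so that either $\sigma(x)>0$ for every $x$ or $\sigma(x)<0$ for every $x$. Consequently $1/\sigma$ is continuous on $\mathbb{R}$, the integral in (\ref{s1_10}) exists as a proper Riemann integral for every $x$, and $H$ is differentiable with $H'=1/\sigma$, which keeps that same constant sign. Hence $H$ is well defined and strictly monotone on $(-\infty,\infty)$ — strictly increasing if $\sigma>0$, strictly decreasing if $\sigma<0$.

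The heart of the argument is surjectivity in (ii). Injectivity is immediate from strict monotonicity, and since $H$ is continuous and strictly monotone its range is the open interval with endpoints $\lim_{x\to-\infty}H(x)$ and $\lim_{x\to+\infty}H(x)$; so it suffices to show these two limits are $\mp\infty$. Assume without loss of generality $\sigma>0$ (the case $\sigma<0$ is symmetric: replace $\sigma$ by $-\sigma$, which merely negates $H$). Let $K$ be a Lipschitz constant for $\sigma$. For $t\ge a$ we have $0<\sigma(t)\le \sigma(a)+K(t-a)$, so
\[
H(x)=\int_a^x\frac{{\rm d}t}{\sigma(t)}\ \ge\ \int_a^x\frac{{\rm d}t}{\sigma(a)+K(t-a)},
\]
and the right-hand side tends to $+\infty$ as $x\to+\infty$ (it equals $(x-a)/\sigma(a)$ when $K=0$, and $\frac{1}{K}\log\frac{\sigma(a)+K(x-a)}{\sigma(a)}$ when $K>0$). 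The analogous estimate for $t\le a$ gives $H(x)\to-\infty$ as $x\to-\infty$. Hence the range of $H$ is all of $(-\infty,\infty)$, which establishes (ii).

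Finally, (iii) and (iv) are formal consequences. By (ii) the bijection $H$ has an inverse $H^{-1}$, and the inverse of a bijection of $(-\infty,\infty)$ onto itself is again such a bijection, which is (iii). For (iv), fix $y\in\mathbb{R}$: the shift $s\mapsto s+y$ is a bijection of $(-\infty,\infty)$, so $H^{-1}(H(\cdot)+y)$ is the composition $H^{-1}\circ(\,\cdot+y)\circ H$ of three bijections of $(-\infty,\infty)$ and is therefore itself a bijection of $(-\infty,\infty)$.

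The one genuine obstacle is the divergence estimate in (ii): it is precisely there that the Lipschitz hypothesis — rather than mere continuity of $\sigma$ — is needed, in order to bound $\sigma$ from above by an affine function and thereby force $\int^{\pm\infty}{\rm d}t/\sigma(t)$ to diverge; without such linear growth control, $1/\sigma$ could be integrable at infinity and $H$ would fail to be onto. Everything else reduces to a sign/monotonicity argument or to elementary manipulation of bijections.
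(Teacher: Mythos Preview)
Your proof is correct and follows essentially the same approach as the paper: the paper also reduces (ii) to showing $H(x)\to\pm\infty$, uses the Lipschitz bound $|\sigma(t)-\sigma(a)|\le L|t-a|$ to dominate $|\sigma(t)|$ by an affine function and thereby force the integral to diverge, and then notes that (iii) and (iv) are immediate consequences of (ii). Your write-up is in fact somewhat more explicit (invoking the intermediate value theorem for the constant sign of $\sigma$, and spelling out the composition-of-bijections argument for (iv)), but the substance is the same.
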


\begin{proof}[Proof of Lemma 1]
Conclusion (i) follow from the fact that $\sigma$ is Lipschitz continuous and has no zero.

To show (ii), since $H$ is monotone and defined on $(-\infty,\infty)$, it is sufficient to show that $H$ goes to infinity as $x$ goes to infinity. Without loss of generality, let us assume $\sigma>0$. It follows from  (\ref{s1_10}) that for $x>a$,
\begin{align}\label{a1_1}
 H(x)  & = \int_a^x \frac{{\rm d} t}{|\sigma(t)|} = \int_a^x \frac{{\rm d} t}{|\sigma(t)-\sigma(a) + \sigma(a)|} \ge  \int_a^x \frac{{\rm d} t}{|\sigma(t)-\sigma(a)| + |\sigma(a)|}\nonumber\\
& \ge \int_a^x \frac{{\rm d} t}{L|t-a| + |\sigma(a)|},
\end{align}
where $L$ is the Lipshitz constant satisfying $|\sigma(t)-\sigma(a)| \le L|t-a|$.
It follows from (\ref{a1_1}) that $H(x) \to +\infty$ as $x\to +\infty$. Similarly, $H(x)\to -\infty$ as $x\to -\infty$ for $\sigma>0$.

  (ii) implies (iii) and (iv).
\end{proof}

Now for the case $\sigma$ is nonzero, we present the Fokker-Planck equation for the SDE (\ref{s1_5}) in the following proposition.

\begin{proposition}\label{proposition1}
Suppose the assumption H1 holds,  $f$ is differentiable, $\sigma$ is Lipschitz continuous and twice differentiable, and $\sigma(x)\ne 0$ for all $x\in \mathbb{R}$, then the probability density function $p(x,t)$ for the solution $X(t)$ of the SDE (\ref{s1_5}) satisfies the following Fokker-Planck equation
 \begin{align}\label{s1_34}
  \frac{\partial p(x,t)}{\partial t}& =-  \frac{\partial}{\partial x} \left[\left(f(x)  + b\sigma (x)  + \frac{A}{2}   \sigma  (x) \sigma'(x)  \right)p(x,t)\right] + \frac{A}{2}   \frac{\partial^2}{\partial x^2}\left(\sigma ^2(x)p(x,t)\right)\nonumber\\
    &+ \int_{{\mathbf R}\backslash \{0\}}     \left[ \frac{\sigma(H^{-1} (H(x)- y)) }{\sigma(x)} p(H^{-1} (H(x)- y),t) \right.\nonumber\\
 &\quad\quad\quad\quad\quad\quad\quad\quad\quad \left. - p(x,t) +   y \,{\mathbf I}_{(-1,\;1)} (y)  \frac{\partial}{\partial x}(\sigma(x) p(x,t)) \right]\nu({\rm d}y),
\end{align}
where $H$ is the transform defined as in (\ref{s1_10}), and $H^{-1}$ is the inverse of $H$.
\end{proposition}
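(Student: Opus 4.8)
The strategy is to use the transform $H$ of (\ref{s1_10}) to reduce the Marcus SDE (\ref{s1_5}) to an It\^o SDE driven by $L$ with \emph{additive} noise, for which the Fokker--Planck equation is classical, and then to transport that equation back to the $x$-variable through $H^{-1}$.

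\textbf{Step 1 (reduction to additive noise).} Set $Y(t):=H(X(t))$. By Lemma \ref{lemma1}, $H$ is a bijection of $\mathbb{R}$; since $\sigma$ is moreover twice differentiable and nowhere zero, $H$ is a $C^2$-diffeomorphism with $H'(x)=1/\sigma(x)$. The essential feature of Marcus SDEs is that they obey the ordinary (Newton--Leibniz) chain rule under $C^2$ changes of the state variable \cite{KurtzPardouxProtter1995,Applebaum2009}; applying it to $H$ shows that $Y$ solves the Marcus SDE with coefficients $\tilde f=H'f$ and $\tilde\sigma=H'\sigma\equiv 1$, that is,
\[
{\rm d}Y(t)=g(Y(t))\,{\rm d}t+1\diamond{\rm d}L(t),\qquad g(y):=\frac{f(H^{-1}(y))}{\sigma(H^{-1}(y))}.
\]
The Marcus integral with unit coefficient is just the ordinary stochastic integral: in (\ref{s1_7}) the Stratonovich-type correction vanishes because $\sigma'\equiv 0$, and the flow (\ref{s1_8}) for $\sigma\equiv 1$ is $\xi(r,x)=x+r$, so each jump-correction summand $\xi(\Delta L(s),X(s-))-X(s-)-\Delta L(s)$ is zero. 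Hence, via the L\'evy--It\^o decomposition (\ref{s1_4}),
\[
{\rm d}Y(t)=\big(g(Y(t))+b\big)\,{\rm d}t+{\rm d}B(t)+\int_{|z|<1}z\,\tilde N({\rm d}t,{\rm d}z)+\int_{|z|\ge 1}z\,N({\rm d}t,{\rm d}z),
\]
an It\^o SDE driven by $L$ with additive noise.

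\textbf{Step 2 (Fokker--Planck equation for $Y$).} The infinitesimal generator of $Y$ is the drift term plus the generator of $L$ \cite{Applebaum2009},
\[
\mathcal{L}\phi(y)=\big(g(y)+b\big)\phi'(y)+\frac{A}{2}\phi''(y)+\int_{\mathbb{R}\setminus\{0\}}\big[\phi(y+z)-\phi(y)-z\,\mathbf{I}_{(-1,1)}(z)\,\phi'(y)\big]\nu({\rm d}z).
\]
Assumption H1, transported through the $C^2$-diffeomorphism $H$, gives the corresponding regularity for the density $q(y,t)$ of $Y(t)$; from $\frac{{\rm d}}{{\rm d}t}\EX[\phi(Y_t)]=\EX[\mathcal{L}\phi(Y_t)]$ for test functions $\phi$ and an integration by parts in $\int\phi\,\partial_t q=\int(\mathcal{L}\phi)\,q$ one reads off the adjoint, so that $q$ satisfies
\[
\partial_t q=-\partial_y\!\big[(g+b)q\big]+\frac{A}{2}\,\partial_y^2 q+\int_{\mathbb{R}\setminus\{0\}}\big[q(y-z,t)-q(y,t)+z\,\mathbf{I}_{(-1,1)}(z)\,\partial_y q(y,t)\big]\nu({\rm d}z).
\]

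\textbf{Step 3 (transport back).} Assume, as in the proof of Lemma \ref{lemma1}, that $\sigma>0$. The change-of-density formula $p(x,t)=q(H(x),t)\,H'(x)=q(H(x),t)/\sigma(x)$ is equivalent to $q(y,t)=\sigma(H^{-1}(y))\,p(H^{-1}(y),t)$. I substitute this into the equation of Step 2 at $y=H(x)$ and use $(H^{-1})'(y)=\sigma(H^{-1}(y))$, so that every $y$-derivative becomes $\sigma(x)\,\partial_x$. Three bookkeeping facts finish it: (a) $(g(y)+b)\,q(y,t)=(f(x)+b\sigma(x))\,p(x,t)$; (b) a jump $y\mapsto y-z$ corresponds to $x\mapsto H^{-1}(H(x)-z)$, so $q(y-z,t)=\sigma(H^{-1}(H(x)-z))\,p(H^{-1}(H(x)-z),t)$, which produces the ratio $\sigma(H^{-1}(H(x)-z))/\sigma(x)$; (c) the elementary identity $\partial_x\!\big[\sigma\,\partial_x(\sigma p)\big]=\partial_x^2(\sigma^2 p)-\partial_x(\sigma\sigma' p)$. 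Collecting terms and dividing through by $\sigma(x)$ gives exactly (\ref{s1_34}).

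I expect Step 1 to be the main obstacle: making rigorous that $Y=H(X)$ genuinely solves the stated additive-noise It\^o SDE. This rests on the Marcus change-of-variables formula and, crucially, on the hypotheses of Lemma \ref{lemma1} (with $\sigma$ twice differentiable and nonvanishing) ensuring that $H$ is an honest $C^2$-diffeomorphism of all of $\mathbb{R}$, so that nothing leaves the domain under the transform. The only other place calling for care is the jump term in Step 3, where one can easily mislabel the argument of $q(\cdot-z)$ after the nonlinear substitution or misplace the Jacobian factor; the drift and diffusion parts are routine once identity (c) is noted.
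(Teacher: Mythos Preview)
Your argument is correct, and the computations in Step~3 do produce (\ref{s1_34}) once one verifies identity~(c) and divides by $\sigma(x)$. However, your route is genuinely different from the paper's. The paper never forms the process $Y=H(X)$: it writes the Marcus SDE for $X$ directly in It\^o form (this is (\ref{s1_9})), uses the flow identity $\xi(y,X(t-))=H^{-1}(H(X(t-))+y)$ only to simplify the jump amplitudes, applies It\^o's formula to a test function $\phi(X)$, takes expectations, and then performs the change of variable $z=H^{-1}(H(x)+y)$ \emph{inside the $x$-integral of the jump term} (this is (\ref{s1_28})). In other words, the paper computes the adjoint of the generator of $X$ by hand, whereas you compute the adjoint of the generator of $Y$ (trivial, since the noise is additive) and then conjugate by the diffeomorphism $H$.

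Your approach is shorter and conceptually cleaner; it exploits the signature property of Marcus SDEs---stability under smooth changes of coordinates---in the most direct way possible. Its cost is exactly what the paper flags in Remark~1: it requires $H$ to be a global $C^2$-diffeomorphism of $\mathbb{R}$, hence $\sigma$ nowhere vanishing, and does not extend to Proposition~\ref{proposition2}. The paper's more pedestrian test-function computation localizes the use of $H$ to the jump integral, which is why it can be patched, interval by interval, when $\sigma$ has zeros (replacing $H^{-1}(H(\cdot)+y)$ by the piecewise map $\tilde H(\cdot,y)$). If your only goal is Proposition~\ref{proposition1}, your proof is preferable; if you also want Proposition~\ref{proposition2}, the paper's approach is the one that generalizes.
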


\begin{remark}
The Fokker-Planck equation (\ref{s1_34}) is the same as the one presented in \cite{SunDuan2012}. However, it is derived here in a different approach from that in \cite{SunDuan2012}.  The advantage of the approach here lies in that it can be modified to be applicable in cases where the coefficient $\sigma$ of the multiplicative noise in the SDE (\ref{s1_5}) has zeros.
\end{remark}

To prove Proposition \ref{proposition1}, we need the following Lemma, which can be found in many books on theory of distribution (e.g.  \cite{Bhattacharyya2012}).
\begin{lemma}\label{lemma2}
Suppose $\gamma_1 \in C (\mathbb{R})$ and $\gamma_2 \in C(\mathbb{R})$, if $\int_{\mathbb{R}}\phi(x) \gamma_1(x)\,{\rm d}x = \int_{\mathbb{R}}\phi(x) \gamma_2(x)\,{\rm d}x$ for all $\phi\in C_0^\infty(\mathbb{R})$, then $\gamma_1(x) = \gamma_2(x)$ for all $x \in \mathbb{R}$.
\end{lemma}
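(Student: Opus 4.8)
The plan is to reduce the two-function statement to a single-function one and then argue by contradiction, exhibiting an explicit test function whose pairing with the difference is forced to be nonzero. First I would set $\gamma := \gamma_1 - \gamma_2$. By linearity of the integral, the hypothesis rewrites as $\int_{\mathbb{R}} \phi(x)\gamma(x)\,{\rm d}x = 0$ for every $\phi \in C_0^\infty(\mathbb{R})$, and $\gamma$ is continuous as the difference of two continuous functions. It therefore suffices to prove the reformulation: if $\gamma \in C(\mathbb{R})$ satisfies $\int_{\mathbb{R}} \phi\gamma\,{\rm d}x = 0$ for all $\phi \in C_0^\infty(\mathbb{R})$, then $\gamma(x)=0$ for all $x$.

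Next I would assume, for contradiction, that $\gamma$ does not vanish identically, so that $\gamma(x_0) \neq 0$ at some $x_0 \in \mathbb{R}$; after replacing $\gamma$ by $-\gamma$ if necessary, we may take $\gamma(x_0) > 0$. Invoking continuity of $\gamma$ at $x_0$, there is a $\delta > 0$ such that $\gamma(x) > \tfrac{1}{2}\gamma(x_0)$ for all $x \in (x_0-\delta, x_0+\delta)$. The strategy is then to test against a smooth, compactly supported, nonnegative function concentrated in this interval, so that the pairing is strictly positive and contradicts the hypothesis.

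The key ingredient is the existence of a nonnegative bump function, for which I would use the classical mollifier
\begin{align*}
\psi(x) = \begin{cases} \exp\left( -\dfrac{1}{1-x^2}\right), & |x| < 1,\\ 0, & |x| \ge 1, \end{cases}
\end{align*}
which belongs to $C_0^\infty(\mathbb{R})$, is supported in $[-1,1]$, satisfies $\psi \ge 0$, and has $\psi(0) > 0$. Rescaling and translating, set $\phi(x) := \psi\!\left((x-x_0)/\delta\right)$; then $\phi \in C_0^\infty(\mathbb{R})$ is nonnegative, vanishes outside $(x_0-\delta, x_0+\delta)$, and is strictly positive at $x_0$, so in particular $\int_{\mathbb{R}}\phi\,{\rm d}x>0$. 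Consequently
\begin{align*}
\int_{\mathbb{R}} \phi(x)\gamma(x)\,{\rm d}x \ge \frac{\gamma(x_0)}{2}\int_{\mathbb{R}} \phi(x)\,{\rm d}x > 0,
\end{align*}
where the first inequality uses $\phi\ge 0$ together with the lower bound $\gamma>\tfrac{1}{2}\gamma(x_0)$ on the set where $\phi$ is nonzero. This contradicts $\int_{\mathbb{R}}\phi\gamma\,{\rm d}x=0$, so $\gamma\equiv 0$ and hence $\gamma_1=\gamma_2$ everywhere.

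I expect essentially no deep obstacle in this argument; it is a standard fact of distribution theory. The only point that genuinely requires care is verifying that $\psi$ is truly $C^\infty$ across $x=\pm 1$, i.e. that all one-sided derivatives of $\exp(-1/(1-x^2))$ vanish as $x\to\pm 1$ from inside, which is a classical computation. Everything else is bookkeeping: ensuring $\phi$ is nonnegative, has strictly positive mass, and is supported where $\gamma$ is bounded away from zero.
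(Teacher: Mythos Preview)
Your argument is correct and is the standard proof of this fundamental lemma. The paper does not actually prove this statement at all: it simply states the lemma and refers the reader to a textbook on distributions (Bhattacharyya 2012). So there is nothing in the paper to compare against beyond noting that your self-contained contradiction argument via a localized nonnegative bump function is exactly the classical route such references take.
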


\begin{proof}[Proof of Proposition I]
It follows from (\ref{s1_4}), (\ref{s1_5}) and (\ref{s1_7}) that \cite{Applebaum2009}
\begin{align}\label{s1_9}
{\rm d}X(t) &=f(X(t)) {\rm d} t+b\sigma(X(t)) {\rm d}t + \sigma (X(t)) {\rm d} B(t) + \frac{A}{2}\sigma(X(t))\sigma'(X(t)) {\rm d}t \nonumber\\
&\quad + \int_{|y|< 1} [ \xi(y,   X(t-)) -X(t-)]\,\tilde N({\rm d} t,{\rm d}y )\nonumber\\
&\quad + \int_{|y| \ge 1}  [\xi(y,  X(t-)) -X(t-)]\,N({\rm d}t,{\rm d}y )\nonumber\\
&\quad + \int_{|y|< 1} [ \xi(y,   X(t-)) -X(t-) - \sigma(X(t-)\,y\, ]\,   \nu({\rm d}y ){\rm d}t.
\end{align}

It follows from the definition for $\xi$ from the ODE  (\ref{s1_8}) and the definition of the transform  $H$ in (\ref{s1_10}) that
\begin{align}\label{s1_11}
H( \xi(\Delta L(t),  X(t-)))- H(X(t-)) = \Delta L(t).
\end{align}
It follows from Lemma  \ref{lemma1} that
\begin{align}\label{s1_12}
\xi(\Delta L(t),   X(t-)) = H^{-1} (H(X(t-))+ \Delta L(t)).
\end{align}
Substituting (\ref{s1_12}) into (\ref{s1_9}), we get
\begin{align} \label{s1_13}
dX(t) &=f(X(t)) {\rm d} t+b\sigma(X(t)) {\rm d}t + \sigma (X(t)) {\rm d} B(t) + \frac{A}{2}\sigma(X(t))\sigma'(X(t)) {\rm d}t  \nonumber\\
&\quad + \int_{|y|< 1} [ H^{-1} (H(X(t-))+y) - X(t-)]\,\tilde N({\rm d}t,{\rm d}y )\nonumber\\
&\quad + \int_{|y| \ge 1}  [ H^{-1} (H(X(t-))+ y) - X(t-)]\,N({\rm d}t,{\rm d}y )\nonumber\\
&\quad + \int_{|y|< 1} [  H^{-1} (H(X(t-))+ y) -  X(t-) - \sigma(X(t-)\,y\, ]\,   \nu({\rm d}y ){\rm d}t.
\end{align}
By It\^o's formula \cite{Applebaum2009}, for  $\phi (x) \in C_0^\infty ({\mathbb R})$, it follows from (\ref{s1_13}) that
\small
\begin{align}\label{s1_14}
& \phi (X(t+\Delta t)) -\phi (X(t))  = \int_t^{t+\Delta t} \phi' (X(s-)) f(X(s-))\,{\rm d}s  +  \int_t^{t+\Delta t} b \phi' (X(s-))\sigma (X(s)) \,{\rm d}s\nonumber\\
&\quad +  \int_t^{t+\Delta t}  \phi' (X(s-) )\sigma (X(s))  \, {\rm d} B(s) + \frac{A}{2} \int_t^{t+\Delta t}  \phi'' (X(s-))\sigma ^2(X(s)) \,{\rm d}s \nonumber\\
&\quad  +\frac{A}{2} \int_t^{t+\Delta t}  \phi' (X(s-))\sigma  (X(s-)) \sigma'(X(s-)) \,{\rm d}s\nonumber\\
&\quad +\int_t^{t+\Delta t}\int_{|y|\ge 1} \left[ \phi\left( H^{-1} (H(X(s-))+ y) \right)-\phi(X(s-))\right]\,N({\rm d}s, {\rm d} y)\nonumber\\
&\quad + \int_t^{t+\Delta t}\int_{|y|< 1} \left[ \phi\left(H^{-1} (H(X(s-))+ y)\right)-\phi(X(s-))\right]\,\widetilde N({\rm d}s, {\rm d} y)\nonumber\\
&\quad + \int_t^{t+\Delta t}\int_{|y|< 1} \left[ \phi\left(H^{-1} (H(X(s-))+ y)\right)-\phi(X(s-))-  \phi'(X(s-))\,\sigma(X(s-))\,y \right]\,\nu ( {\rm d} y){\rm d}s\,.
\end{align}
\normalsize
Taking expectation at both sides of (\ref{s1_14}), we obtain
\footnotesize
\begin{align}\label{s1_15}
& \int_{-\infty}^{\infty} \phi (x)p(x, t+\Delta t) {\rm d} x  - \int_{-\infty}^{\infty} \phi (x) p(x,t) {\rm d}x \nonumber\\
 &= \int_t^{t+\Delta t} \int_{-\infty}^{\infty} \phi' (x) f(x)p(x,s)\,{\rm d}x\,{\rm d}s  + b \int_t^{t+\Delta t}\int_{-\infty}^{\infty}  \phi' (x) \sigma (x) p(x,s)\,{\rm d} x \,{\rm d}s\nonumber\\
 &\quad  +\frac{A}{2} \int_t^{t+\Delta t} \int_{-\infty}^{\infty} \phi' (x) \sigma  (x) \sigma'(x) p(x,s)\,{\rm d}x\,{\rm d}s +
   \frac{A}{2} \int_t^{t+\Delta t} \int_{-\infty}^{\infty} \phi'' (x)\sigma ^2(x)p(x,s)\,{\rm d}x\,{\rm d}s \nonumber\\
&\quad +\int_t^{t+\Delta t}\int_{-\infty}^{\infty} \int_{|y|\ge 1} \left[ \phi\left( H^{-1} (H(x)+ y) \right)-\phi(x)\right]p(x,s)\,{\nu}({\rm d} y)\, {\rm d} x\, {\rm d}s\nonumber\\
&\quad + \int_t^{t+\Delta t} \int_{-\infty}^{\infty} \int_{|y|< 1} \left[ \phi\left(H^{-1} (H(x)+ y)\right)-\phi(x)-   \phi'(x)\sigma(x) \,y \right]p(x,s)\,\nu ( {\rm d} y)\,{\rm d}x\,{\rm d}s\,.\nonumber\\
\end{align}
\normalsize
To derive the above equation, we have used the following facts \cite{Applebaum2009}:
\begin{align}\label{s1_16}
{\mathbf E} \left\{ \int_t^{t+\Delta t}  \phi' (X(s-) \sigma (X(s))  \, {\rm d} B(s)\right\} =0,
\end{align}
\begin{align}\label{s1_18a}
 {\mathbf E} \left\{\int_t^{t+\Delta t}\int_{|y|\ge 1} \left[ \phi\left( H^{-1} (H(X(s-))+ y) \right)-\phi(X(s-))\right]\,\tilde N({\rm d}s, {\rm d} y)\right\}=0,
\end{align}
\begin{align}\label{s1_17}
{\mathbf E} \left\{\int_t^{t+\Delta t}\int_{|y|< 1} \left[ \phi\left(H^{-1} (H(X(s-))+ y)\right)-\phi(X(s-))\right]\,\widetilde N({\rm d}s, {\rm d} y)\right\} = 0,
\end{align}
and
\begin{align}\label{s1_18}
&{\mathbf E} \left\{\int_t^{t+\Delta t}\int_{|y|\ge 1} \left[ \phi\left( H^{-1} (H(X(s-))+ y) \right)-\phi(X(s-))\right]\,N({\rm d}s, {\rm d} y)\right\}\nonumber\\
&\quad = {\mathbf E} \left\{\int_t^{t+\Delta t}\int_{|y|\ge 1} \left[ \phi\left( H^{-1} (H(X(s-))+ y) \right)-\phi(X(s-))\right]\,  \nu ({\rm d} y)\,{\rm d}s\right\}\nonumber\\
&\quad =   \int_t^{t+\Delta t} \int_{-\infty}^{\infty} \int_{|y|\ge 1} \left[ \phi\left( H^{-1} (H(x)+ y) \right)-\phi(x)\right]p(x,s)\, \nu ({\rm d} y)\,{\rm d}x\,{\rm d}s\,.\nonumber\\
\end{align}
Note that the first identity in (\ref{s1_18}) follows from (\ref{s1_18a}).

Equation (\ref{s1_15}) can be rewritten as
\small
\begin{align}\label{s1_19a}
& \int_t^{t+\Delta t} \int_{-\infty}^{\infty} \phi (x) \dfrac{{\rm d}p(x,s)}{{\rm d}s} \,{\rm d}x \,{\rm d}s\nonumber\\
 &= \int_t^{t+\Delta t} \int_{-\infty}^{\infty} \phi' (x) \left[f(x)p(x,s) + b\sigma (x) p(x,s) + \frac{A}{2}   \sigma  (x) \sigma'(x) p(x,s) \right]\,{\rm d}x\,{\rm d}s\nonumber\\
 &\quad
  + \frac{A}{2} \int_t^{t+\Delta t} \int_{-\infty}^{\infty} \phi'' (x)\sigma ^2(x)p(x,s)\,{\rm d}x \,{\rm d}s \nonumber\\
&\quad +\int_t^{t+\Delta t}\int_{-\infty}^{\infty} \int_{{\mathbf R}\backslash \{0\}} \left\{  \phi\left( H^{-1} (H(x)+ y) \right)-\phi(x)  -   \phi'(x)\,\sigma(x) \,y\,{\mathbf I}_{(-1,\;1)} (y) \right\} p(x,s)\,\nu ( {\rm d} y)\,{\rm d}x\,{\rm d}s,
\end{align}
 \normalsize
 where ${\mathbf I}_{(-1,\;1)} (y)$ is the indicator function.

 Since (\ref{s1_19a}) is true for any $t$ and $\Delta t$,  it follows that
 \small
 \begin{align}\label{s1_19}
& \int_{-\infty}^{\infty} \phi (x) \frac{\partial p(x,t)}{\partial t} {\rm d}x \nonumber\\
 &=   \int_{-\infty}^{\infty} \phi' (x) \left[f(x)p(x,t) + b\sigma (x) p(x,t) + \frac{A}{2}   \sigma  (x) \sigma'(x) p(x,t) \right]\, {\rm d}x\nonumber\\
 &\quad
  + \frac{A}{2}  \int_{-\infty}^{\infty} \phi'' (x)\sigma ^2(x)p(x,t) \, {\rm d}x \nonumber\\
&\quad + \int_{-\infty}^{\infty} \,{\rm d}x\, \int_{{\mathbf R}\backslash \{0\}} \left\{  \phi\left( H^{-1} (H(x)+ y) \right)-\phi(x)  -   \phi'(x)\sigma(x) \,y\,{\mathbf I}_{(-1,\;1)} (y) \right\}p(x,t)\,\nu ( {\rm d} y) .
\end{align}
 \normalsize
 By integration by parts, the first two integrals in the right hand side of (\ref{s1_19}) become
 \begin{align}\label{s1_20}
 &\int_{-\infty}^{\infty} \phi' (x) \left(f(x)  + b\sigma (x)  + \frac{A}{2}   \sigma  (x) \sigma'(x) \right)\, p(x,t) {\rm d}x \nonumber\\
 &=- \int_{-\infty}^{\infty} \phi (x) \frac{\partial}{\partial x} \left[\left(f(x)  + b\sigma (x)  + \frac{A}{2}   \sigma  (x) \sigma'(x)\right) p(x,t) \right]\, {\rm d}x,
\end{align}
and
\begin{align}\label{s1_21}
  \int_{-\infty}^{\infty} \phi'' (x)\sigma ^2(x)p(x,t) \, {\rm d}x
    =   \int_{-\infty}^{\infty} \phi (x) \frac{\partial^2}{\partial x^2}\left[\sigma ^2(x)p(x,t)\right] \, {\rm d}x \,,
\end{align}
respectively.

By  interchanging order of integrals, the last term of (\ref{s1_19}) becomes
\begin{align}\label{s1_22}
&\int_{-\infty}^{\infty}{\rm d}x \int_{{\mathbf R}\backslash \{0\}} \left[  \phi\left( H^{-1} (H(x)+ y) \right)-\phi(x)  -   \phi'(x)\sigma(x) \,y\, {\mathbf I}_{(-1,\;1)} (y) \right]p(x,t)\,\nu ( {\rm d} y)\nonumber\\
&= \int_{{\mathbf R}\backslash \{0\}} \nu ( {\rm d} y) \int_{-\infty}^{\infty} \left[  \phi\left( H^{-1} (H(x)+ y) \right)-\phi(x)  -   \phi'(x)\,\sigma(x) \,y\, {\mathbf I}_{(-1,\;1)} (y) \right]p(x,t)\,{\rm d}x
\end{align}
The interchanging order of integrals above is justified by
\begin{align}\label{s1_23}
&\int_{-\infty}^{\infty} \int_{{\mathbf R}\backslash \{0\}} \bigg|\left[  \phi\left( H^{-1} (H(x)+ y) \right)-\phi(x)  -   \phi'(x)\,\sigma(x) \,y\,{\mathbf I}_{(-1,\;1)} (y) \right]p(x,t)\bigg|\,\nu ( {\rm d} y)\,{\rm d}x\nonumber\\
& \le \int_{-\infty}^{\infty} \int_{|y|<1} \bigg|\left[  \phi\left( H^{-1} (H(x)+ y) \right)-\phi(x)  -   \phi'(x)\,\sigma(x)\,y\,   \right]p(x,t)\bigg|\,\nu ( {\rm d} y)\,{\rm d}x\nonumber\\
&\quad\quad +\int_{-\infty}^{\infty} \int_{{|y|\ge 1}} \bigg|\left[  \phi\left( H^{-1} (H(x)+ y) \right)-\phi(x)  \right]p(x,t)\bigg|\,\nu ( {\rm d} y)\,{\rm d}x\nonumber\\
&< +\infty.
\end{align}
To prove the last inequality in (\ref{s1_23}), we have used (\ref{s1_1}) and the fact that $\phi(x)~\in~C_0^\infty~({\mathbb R})$.

Next, let us examine the integral inside  the last term of (\ref{s1_22}), which can be written as
\begin{align}\label{s1_24}
& \int_{-\infty}^{\infty} \left[  \phi\left( H^{-1} (H(x)+ y) \right)-\phi(x)  -   \phi'(x) \sigma(x) \,y\,{\mathbf I}_{(-1,\;1)} (y) \right]p(x,t)\,{\rm d}x\nonumber\\
&\quad = \int_{-\infty}^{\infty}    \phi\left( H^{-1} (H(x)+ y) \right) p(x,t)\,{\rm d}x - \int_{-\infty}^{\infty}    \phi (x)  p(x,t)\,{\rm d}x \nonumber\\
& \quad \quad - \int_{-\infty}^{\infty}   \phi'(x)\, \sigma(x)\,y\, {\mathbf I}_{(-1,\;1)} (y)  p(x,t)\,{\rm d}x.
 \end{align}
 Denote
 \begin{align}\label{s1_25}
 z=  H^{-1} (H(x)+ y),
 \end{align}
 It follows from (\ref{s1_25}) and (\ref{s1_10}) that
 \begin{align}\label{s1_26}
 x=H^{-1} (H(z)- y),\quad \frac{{\rm d} x}{{\rm d} z} = \frac{\sigma(H^{-1} (H(z)- y)}{\sigma (z)}.
 \end{align}
 For the first integral in the right hand side of (\ref{s1_24}), by the change of variable and using (\ref{s1_26}) and (iii) in Lemma 1,  we can get
 \begin{align}\label{s1_28}
 & \int_{-\infty}^{\infty}    \phi\left( H^{-1} (H(x)+ y) \right) p(x,t)\,{\rm d}x \nonumber\\
 &= \int_{-\infty}^{\infty}    \phi ( z )\frac{\sigma(H^{-1} (H(z)- y) ) }{\sigma(z)} p(H^{-1} (H(z)- y),t)\,{\rm d}z \nonumber\\
 &= \int_{-\infty}^{\infty}    \phi (x)   \frac{\sigma(H^{-1} (H(x)- y)) }{\sigma(x)} p(H^{-1} (H(x)- y),t)\,{\rm d}x.
 \end{align}
  For the last integral at the right hand side of (\ref{s1_24}), we have
 \begin{align}\label{s1_29}
 \int_{-\infty}^{\infty}   \phi'(x) \,\sigma(x)\,y {\mathbf I}_{(-1,\;1)} (y)  p(x,t)\,{\rm d}x = -\int_{-\infty}^{\infty}   \phi(x)\, y \,{\mathbf I}_{(-1,\;1)} (y)  \frac{\partial}{\partial x} (\sigma(x)\,p(x,t))\,{\rm d}x.
 \end{align}
 Substituting (\ref{s1_28}) and (\ref{s1_29}) into (\ref{s1_24}), we obtain
\begin{align}\label{s1_30}
& \int_{-\infty}^{\infty} \left[  \phi\left( H^{-1} (H(x)+ y) \right)-\phi(x)  -   \phi'(x) \,\sigma(x)\,y\,{\mathbf I}_{(-1,\;1)} (y) \right]p(x,t)\,{\rm d}x\nonumber\\
 &  = \int_{-\infty}^{\infty}    \phi (x)  \left[ \frac{\sigma(H^{-1} (H(x)- y)) }{\sigma(x)} p(H^{-1} (H(x)- y),t) \right.\nonumber\\
 &\quad\quad\quad\quad\quad\quad \left. - p(x,t) +  y \,{\mathbf I}_{(-1,\;1)} (y)  \frac{\partial}{\partial x}\left(\sigma(x) p(x,t)\right) \right]{\rm d}x.
 \end{align}
 Substituting (\ref{s1_30}) into (\ref{s1_22}), we get
 \begin{align}\label{s1_31}
&\int_{-\infty}^{\infty}{\rm d}x \int_{{\mathbf R}\backslash \{0\}} \left[  \phi\left( H^{-1} (H(x)+ y) \right)-\phi(x)  -   \phi'(x) \sigma(x)\,y\, {\mathbf I}_{(-1,\;1)} (y) \right]p(x,t)\,\nu ( {\rm d} y)\nonumber\\
&= \int_{{\mathbf R}\backslash \{0\}} \nu ( {\rm d} y) \int_{-\infty}^{\infty}    \phi (x)  \left[ \frac{\sigma(H^{-1} (H(x)- y)) }{\sigma(x)} p(H^{-1} (H(x)- y),t) \right.\nonumber\\
 &\quad\quad\quad\quad\quad\quad \left. - p(x,t) +  y \,{\mathbf I}_{(-1,\;1)} (y)  \frac{\partial}{\partial x}(\sigma(x) p(x,t)) \right]{\rm d}x.
\end{align}
By interchanging order of integrals, (\ref{s1_31}) becomes
 \begin{align}\label{s1_32}
&\int_{-\infty}^{\infty}{\rm d}x \int_{{\mathbf R}\backslash \{0\}} \left[  \phi\left( H^{-1} (H(x)+ y) \right)-\phi(x)  -   \phi'(x) \,\sigma(x)\,y\, {\mathbf I}_{(-1,\;1)} (y) \right]p(x,t)\,\nu ( {\rm d} y)\nonumber\\
&= \int_{-\infty}^{\infty}{\rm d}x \int_{{\mathbf R}\backslash \{0\}}    \phi (x)  \left[ \frac{\sigma(H^{-1} (H(x)- y)) }{\sigma(x)} p(H^{-1} (H(x)- y),t) \right.\nonumber\\
 &\quad\quad\quad\quad\quad\quad \left. - p(x,t) +  y \,{\mathbf I}_{(-1,\;1)} (y)  \frac{\partial}{\partial x}(\sigma(x) p(x,t)) \right]\nu({\rm d}y).
\end{align}
Substituting (\ref{s1_20}), (\ref{s1_21}), and (\ref{s1_32}) into (\ref{s1_19}), we get
  \small
 \begin{align}\label{s1_33}
& \int_{-\infty}^{\infty} \phi (x) \frac{\partial p(x,t)}{\partial t} {\rm d}x \nonumber\\
   &=- \int_{-\infty}^{\infty} \phi (x) \frac{\partial}{\partial x} \left[\left(f(x)  + b\sigma (x)  + \frac{A}{2}   \sigma  (x) \sigma'(x)\right) p(x,t) \right]\, {\rm d}x\nonumber\\
 &\quad
  + \frac{A}{2}  \int_{-\infty}^{\infty} \phi (x) \frac{\partial^2}{\partial x^2}\left[\sigma ^2(x)p(x,t)\right] \, {\rm d}x \nonumber\\
&\quad + \int_{-\infty}^{\infty}{\rm d}x \int_{{\mathbf R}\backslash \{0\}}    \phi (x)  \left[ \frac{\sigma(H^{-1} (H(x)- y)) }{\sigma(x)} p(H^{-1} (H(x)- y),t) \right.\nonumber\\
 &\quad\quad\quad\quad\quad\quad\quad\quad\quad \left. - p(x,t) +  y \,{\mathbf I}_{(-1,\;1)} (y)  \frac{\partial}{\partial x}(\sigma(x) p(x,t)) \right]\nu({\rm d}y)
\end{align}
\normalsize
Since the above equation is true for any $\phi(x) \in C_0^\infty ({\mathbb R})$, it follows from Assumption H1 and Lemma \ref{lemma2} that the probability density function $p$ satisfies (\ref{s1_34}).
\end{proof}

\subsection{Cases where $\sigma$ has zeros}
When $\sigma$ has zeros, the transform $H$ given in (\ref{s1_10}) is not well defined.  Suppose $\sigma$ has $n$ zeros represented by $x_i$ ($i=1, 2, \cdots, n$). We denote  $x_0 = -\infty$ and $x_{n+1}=+\infty$ for convenience. Without loss of generality, we suppose
\begin{align}\label{s1_35}
-\infty = x_0 <x_1 < \cdots < x_n <x_{n+1} = +\infty.
\end{align}
\begin{definition}
The transforms $H_i$($i=0,1,2,\cdots,n$) are defined as
\begin{align}\label{s1_36}
H_i (x) =  \int_{a_i} ^x \frac{{\rm d}y}{\sigma(y)}\quad\quad \text{for\quad $x\in (x_i, x_{i+1})$},
\end{align}
where $a_i$ is  any  constant in $(x_i, x_{i+1})$.
\end{definition}
The transforms $\{H_i\}$ defined in (\ref{s1_36})  have the following properties.
\begin{lemma}
Suppose $\sigma$ is Lipschitz and has n zeros, then each transform $H_i$($i=1,2,\cdots,n$) defined by (\ref{s1_36}) has the following properties:\\
(i) each $H_i$ is well defined and monotone on $(x_{i},x_{i+1})$;\\
(ii) each $H_i$ is bijective and maps from $(x_{i},x_{i+1})$ to $(-\infty,\infty)$;\\
(iii) each $H_i$ has the inverse  $H_i^{-1}$, and  $H_i^{-1}$ is  bijective and maps from $(-\infty,\infty)$ to $(x_{i},x_{i+1})$;\\
(iv) $\forall y\in \mathbb{R}$, each $H_i^{-1}(H_i(\cdot)+y)$  is bijective from $(x_{i},x_{i+1})$ to $(x_{i},x_{i+1})$, and has the following property
\begin{align}\label{s1_37}
\begin{cases}
\lim_{x \to x_i +} H_i^{-1} (H_i (x) +y) = x_i,\\
\lim_{x \to x_{i+1}-} H_i^{-1} (H_i (x)+y) = x_{i+1},
\end{cases}
\end{align}
where $\lim_{x \to x_i +}$ represents the right limit at $x=x_i$, and $\lim_{x \to x_{i+1}-}$ the left limit at $x=x_{i+1}$.
\end{lemma}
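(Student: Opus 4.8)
The plan is to mirror the proof of Lemma~\ref{lemma1}, but with the roles of the endpoints reversed: there $H$ diverged at $\pm\infty$ because Lipschitz continuity forces $|\sigma|$ to grow at most linearly, whereas here each $H_i$ will diverge at the two adjacent zeros $x_i$ and $x_{i+1}$ because Lipschitz continuity forces $|\sigma|$ to \emph{vanish} at most linearly there, so $1/\sigma$ is non-integrable near those points. Fixing $i\in\{1,\dots,n\}$, I would first observe that $x_i$ and $x_{i+1}$ are consecutive zeros of the continuous function $\sigma$, so $\sigma$ has constant sign on $(x_i,x_{i+1})$; assume without loss of generality $\sigma>0$ there. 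Then $1/\sigma$ is continuous and positive on $(x_i,x_{i+1})$, the integral in (\ref{s1_36}) converges for every $x$ in that interval, and $H_i$ is well defined and strictly increasing --- this is (i).

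For (ii), by monotonicity it is enough to show $H_i(x)\to+\infty$ as $x\to x_{i+1}^-$ and $H_i(x)\to-\infty$ as $x\to x_i^+$. Letting $L$ be a Lipschitz constant for $\sigma$ and using $\sigma(x_{i+1})=0$, one has $|\sigma(t)|=|\sigma(t)-\sigma(x_{i+1})|\le L(x_{i+1}-t)$ for $t\in(a_i,x_{i+1})$, hence
\[
H_i(x)=\int_{a_i}^x\frac{{\rm d}t}{\sigma(t)}\ge\int_{a_i}^x\frac{{\rm d}t}{L(x_{i+1}-t)}=\frac{1}{L}\ln\frac{x_{i+1}-a_i}{x_{i+1}-x}\to+\infty\quad\text{as } x\to x_{i+1}^-.
\]
The symmetric estimate with $|\sigma(t)|\le L(t-x_i)$ gives $H_i(x)\to-\infty$ as $x\to x_i^+$. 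Being continuous, strictly increasing, and having these two limits, $H_i$ is a bijection of $(x_i,x_{i+1})$ onto $(-\infty,\infty)$; the case $\sigma<0$ is identical with ``decreasing'' in place of ``increasing''. Statement (iii) then follows at once, since a continuous strictly monotone bijection has a continuous strictly monotone inverse, and $H_i^{-1}$ carries $(-\infty,\infty)$ onto $(x_i,x_{i+1})$.

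For (iv), fixing $y\in\mathbb{R}$, the translation $x\mapsto H_i(x)+y$ is a bijection of $(x_i,x_{i+1})$ onto $(-\infty,\infty)$, so composing with the bijection $H_i^{-1}\colon(-\infty,\infty)\to(x_i,x_{i+1})$ shows $H_i^{-1}(H_i(\cdot)+y)$ is a bijection of $(x_i,x_{i+1})$ onto itself. For the limits (\ref{s1_37}): as $x\to x_i^+$, $H_i(x)+y$ tends to the same infinity as $H_i(x)$ (namely $-\infty$ when $\sigma>0$), and $H_i^{-1}$ sends that infinity to the endpoint $x_i$, so $H_i^{-1}(H_i(x)+y)\to x_i$; the claim at $x_{i+1}$ is the same. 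I expect the only substantive step --- and the one I would write out carefully --- to be the divergence estimate in (ii), which is precisely where the Lipschitz hypothesis enters; parts (i), (iii) and (iv) are then formal consequences of monotonicity and continuity, exactly as in Lemma~\ref{lemma1}. It is also worth noting, as motivation for the later Fokker--Planck derivation, that (\ref{s1_37}) says the zeros of $\sigma$ act as invariant barriers for the Marcus flow $\xi(\cdot,\cdot)$, so the solution started in $(x_i,x_{i+1})$ stays there.
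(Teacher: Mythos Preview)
Your proposal is correct and follows essentially the same approach as the paper: use the Lipschitz condition at a zero $x_j$ to get $|\sigma(t)|\le L|t-x_j|$, whence $1/|\sigma|$ is non-integrable near $x_j$ and $H_i$ diverges there. One small oversight: you assert that ``$x_i$ and $x_{i+1}$ are consecutive zeros of $\sigma$,'' but for $i=n$ the right endpoint is $x_{n+1}=+\infty$, which is not a zero, so your estimate $|\sigma(t)|\le L(x_{i+1}-t)$ is unavailable there; the divergence of $H_n$ as $x\to+\infty$ comes instead from the at-most-linear-growth bound $|\sigma(t)|\le|\sigma(a_n)|+L|t-a_n|$ of Lemma~\ref{lemma1} (which you allude to in your opening sentence but never actually invoke in the body). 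The paper makes exactly this distinction, separating $H_0$ and $H_n$ from the interior $H_i$ with $1\le i\le n-1$. Apart from this, your write-up is if anything more careful than the paper's --- your explicit derivation of the limits in (iv) is more informative than the paper's bare ``(iii) and (iv) are implied by (ii).''
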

\begin{proof}[Proof of Lemma 2]
(i) follows from the fact that $\sigma(x) \ne 0, \; \forall x\in (x_i, x_{i+1})$.

To show (ii), since $H_i$ is monotone and defined on $(x_i, x_{i+1})$, it is sufficient to show that $H_i$ goes to infinity as $x$ approaches $x_i$ or $x_{i-1}$. Without loss of generality, we suppose $\sigma(x)>0$ for $x\in (x_i,x_{i+1})$.

For $H_0$ and $H_n$, we have
\begin{align}\label{a2_1}
H_0(x) &= \int_{a_0}^x \frac{{\rm d} t}{|\sigma(t)|} = \int_{a_0}^x \frac{{\rm d} t}{ |\sigma(t)-\sigma(x_1) |}  \ge \int_{a_0}^x \frac{{\rm d} t}{L| t - x_1|  },\quad\quad \forall x\in (x_0,x_1),
\end{align}
and
\begin{align}\label{a2_1a}
H_n(x) &= \int_{a_n}^x \frac{{\rm d} t}{|\sigma(t)|} = \int_{a_n}^x \frac{{\rm d} t}{ |\sigma(t)-\sigma(x_n) |}  \ge \int_{a_n}^x \frac{{\rm d} t}{L| t - x_n|  },\quad\quad \forall x\in (x_n,x_{n+1}),
\end{align}
respectively.

For $H_i$ ($i=1,2,\cdots,n-1$), we have
\begin{align}\label{a2_1b}
H_i(x) &= \int_{a_i}^x \frac{{\rm d} t}{|\sigma(t)|} = \int_{a_i}^x \frac{{\rm d} t}{ |\sigma(t)-\sigma(x_i)| }  \ge \int_{a_i}^x \frac{{\rm d} t}{L| t - x_i|  }, \quad\quad \forall  x\in (x_i, x_{i+1}),
\end{align}
and
\begin{align}\label{a2_1c}
H_i(x)&= \int_{a_i}^x \frac{{\rm d} t}{|\sigma(t)|} = \int_{a_i}^x \frac{{\rm d} t}{ |\sigma(t)-\sigma(x_{i+1})| }  \ge \int_{a_i}^x \frac{{\rm d} t}{L| t - x_{i+1}|  }, \quad\quad \forall  x\in (x_i, x_{i+1}),
\end{align}
It follows from (\ref{a2_1}) to (\ref{a2_1c}) that $H_i(x)$($i=0,1,2,\cdots,n$) goes to infinity as $x$ approaches $x_i$ or $x_{i-1}$. Now, the proof of (ii) is finished.

 (iii) and (iv) are implied by (ii).
\end{proof}

\begin{definition}
The transform $\tilde H: \mathbb{R}\times\mathbb{R}\to \mathbb{R}$ associated with SDE (\ref{s1_5}) is defined by
\begin{align}\label{s1_39}
\tilde H(x,y)=
\begin{cases}
H_i^{-1} \big(H_i (x) + y\big), &\text{for $x \in (x_i,x_{i+1})$ and $y \in \mathbb{R}$,}\\
x, &\text{for $x= x_1, x_2, \cdots, x_n $ and $y \in \mathbb{R}$},
\end{cases}
\end{align}
\end{definition}
The transform $\tilde H$ has the following properties.
\begin{lemma}\label{lemma4}
Suppose $\sigma$ is a real analytic function (i.e., it possesses derivatives of all orders and its function value agrees with its Taylor series in a neighborhood of every point) and has $n$ zeros denoted as in (\ref{s1_35}), then for any given $y\in \mathbb{R}$, $\tilde H(x,y)$ is continuously differentiable with respect to $x$ for all $x \in \mathbb{R}$. Moreover,
\begin{align}\label{s1_41c}
 \frac{{\rm \partial} \tilde H(x, y)}{{\rm \partial}x}=\begin{cases}\frac{\sigma(\tilde H(x, y))}{\sigma (x)}  &\quad\quad \text{for $x\in \bigcup\limits_{i=0}^{n} (x_i,x_{i+1})$},\\ \sum \limits_{k=0}^{\infty} \dfrac{1}{k!} \Phi_k(x) y^k &\quad\quad \text{for $x=x_1,x_2,\cdots,x_n$,}\end{cases}
\end{align}
where $\Phi_k(x_i)$ is defined as
\begin{align}\label{a1_5}
\begin{cases}
\Phi_0(x_i) = 1,\\
\Phi_k(x_i) = \lim_{x\to x_i} \underbrace{\frac{{\rm d}}{{\rm d}x}\left(\sigma (x) \frac{{\rm d}}{{\rm d}x}\left(\sigma (x) \cdots \left(\frac{{\rm d}}{{\rm d}x}\sigma (x)\right)\right)\right)}_{k-fold}\quad\quad (k=1, 2, \cdots).
\end{cases}
\end{align}
\end{lemma}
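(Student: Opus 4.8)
The plan is to recognize $\tilde H$ as the flow of the autonomous scalar vector field $\sigma$. Concretely, I would first show that for each fixed $x$ the curve $y\mapsto\tilde H(x,y)$ solves the initial value problem
\begin{align}\label{prop_flow}
\partial_y \tilde H(x,y)=\sigma\big(\tilde H(x,y)\big),\qquad \tilde H(x,0)=x.
\end{align}
On each open interval $(x_i,x_{i+1})$ this follows by differentiating the defining identity $H_i(\tilde H(x,y))=H_i(x)+y$ in $y$ and using $H_i'=1/\sigma$; at the zeros, $\tilde H(x_i,y)\equiv x_i$ is the equilibrium solution because $\sigma(x_i)=0$. By the preceding lemma each $H_i$ is a bijection of $(x_i,x_{i+1})$ onto $\mathbb{R}$, so $\tilde H$ is defined on all of $\mathbb{R}^2$ by (\ref{s1_39}); equivalently, no integral curve of (\ref{prop_flow}) reaches an endpoint $x_i$ in finite $y$ (here only the local Lipschitz bound of $\sigma$ near the zeros is used), so (\ref{prop_flow}) defines a complete flow on $\mathbb{R}$, and by uniqueness for the locally Lipschitz field $\sigma$ it coincides with $\tilde H$. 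On the open intervals the desired formula is then immediate: $H_i$ is analytic with nonvanishing derivative, so $H_i^{-1}$ is analytic with $(H_i^{-1})'=\sigma\circ H_i^{-1}$, and the chain rule gives $\partial_x\tilde H(x,y)=\sigma(\tilde H(x,y))/\sigma(x)$, the first branch of (\ref{s1_41c}).

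The heart of the matter is the behaviour at the zeros $x_i$, both the $C^1$ regularity there and the value of the derivative. For regularity I would appeal to the theorem on analytic (hence $C^\infty$) dependence of solutions of an analytic ordinary differential equation on their initial data: since $\sigma$ is real analytic and the solution of (\ref{prop_flow}) starting at $x_i$ is the constant $x_i$, which exists for all $y$, that theorem yields, for $x$ in a neighbourhood of $x_i$ and $|y|$ bounded by any prescribed $T$, that the solution exists and $(x,y)\mapsto\tilde H(x,y)$ is real analytic there; by uniqueness this is the $\tilde H$ of (\ref{s1_39}). Together with analyticity on the open intervals, this gives $\tilde H(\cdot,y)\in C^\infty(\mathbb{R})$ for each fixed $y$, which is stronger than the asserted $C^1$ regularity, and in particular the one-sided limits of $\sigma(\tilde H(x,y))/\sigma(x)$ as $x\to x_i\pm$ exist and equal $\partial_x\tilde H(x_i,y)$.

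To pin down $\partial_x\tilde H(x_i,y)$, I would differentiate (\ref{prop_flow}) in $x$ (legitimate by the regularity just obtained) to get the variational equation $\partial_y g=\sigma'(\tilde H(x,y))\,g$ with $g(x,0)=1$, where $g(x,y):=\partial_x\tilde H(x,y)$. At $x=x_i$ we have $\tilde H(x_i,y)\equiv x_i$, so $g(x_i,\cdot)$ solves the linear ODE $g'=\sigma'(x_i)g$, $g(0)=1$, hence
\begin{align}\label{prop_exp}
\partial_x\tilde H(x_i,y)=e^{\sigma'(x_i)y}=\sum_{k=0}^{\infty}\frac{(\sigma'(x_i))^{k}}{k!}\,y^{k}.
\end{align}
It then remains to verify $\Phi_k(x_i)=(\sigma'(x_i))^{k}$. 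Setting $\psi_0(x)=x$ and $\psi_{k+1}(x)=\sigma(x)\psi_k'(x)$, one checks from the definition (\ref{a1_5}) that $\Phi_k=\psi_k'$ for all $k\ge0$, while $\psi_k(x_i)=0$ for $k\ge1$ since each such $\psi_k$ carries a factor $\sigma$; differentiating the recursion gives $\psi_{k+1}'=\sigma'\psi_k'+\sigma\psi_k''$, so evaluating at $x_i$ yields $\Phi_{k+1}(x_i)=\sigma'(x_i)\Phi_k(x_i)$, and with $\Phi_0(x_i)=1$ an induction gives $\Phi_k(x_i)=(\sigma'(x_i))^{k}$. Substituting into (\ref{prop_exp}) produces the second branch of (\ref{s1_41c}).

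The step I expect to be the main obstacle is the $C^1$ regularity at the zeros: near $x_i$ the function $\tilde H(\cdot,y)$ is described only implicitly, and by two different branches $H_{i-1}^{-1}(H_{i-1}(\cdot)+y)$ and $H_i^{-1}(H_i(\cdot)+y)$, so one must exclude a corner there. The reinterpretation of $\tilde H$ as a flow is precisely what removes the difficulty, since the endpoint zeros become equilibria and the standard smooth/analytic dependence theorem applies; checking the hypothesis of that theorem — global-in-$y$ existence, i.e. completeness of the flow — is exactly where the bijectivity $H_i:(x_i,x_{i+1})\to\mathbb{R}$ from the preceding lemma enters.
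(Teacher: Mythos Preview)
Your argument is correct but follows a genuinely different route from the paper. The paper does not invoke the smooth-dependence theorem for flows; instead it writes the Taylor expansion of $\tilde H(x,y)$ in the \emph{time} variable $y$ at $y=0$ (using $\partial_y\tilde H=\sigma(\tilde H)$) to obtain $\tilde H(x,y)=\sum_{k\ge 0}\psi_k(x)y^k/k!$ with exactly your $\psi_k$, then evaluates the difference quotient $(\tilde H(x,y)-x_i)/(x-x_i)$ term by term to get $\partial_x\tilde H(x_i,y)=\sum_k\Phi_k(x_i)y^k/k!$; it separately expands $\sigma(\tilde H(x,y))$ in $y$, divides by $\sigma(x)$, and checks that the limit as $x\to x_i$ gives the same series, establishing continuity of $\partial_x\tilde H$ at $x_i$. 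Your approach trades this explicit series manipulation for a citation of analytic dependence on initial data plus the variational equation, which immediately yields the closed form $\partial_x\tilde H(x_i,y)=e^{\sigma'(x_i)y}$ that the paper never writes down; your recursion $\Phi_{k+1}(x_i)=\sigma'(x_i)\Phi_k(x_i)$ then reconciles this with the stated series. The paper's route is more self-contained and keeps $\Phi_k(x_i)$ in the general (unidentified) form, while yours is shorter, avoids the somewhat delicate justification of term-by-term limits in the difference quotient, and uncovers the simple exponential value of the derivative at the zeros.
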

\begin{proof}[Proof of Lemma 3]

For $x \ne x_i(i=1, 2, \cdots, n)$, since $\sigma$ is analytic, it is straightforward to check that $\tilde H(x,y)$ is continuously differentiable with respect to $x$, and by direct computation we can get
\begin{align}\label{a1_3b}
\frac{\partial \tilde H(x, y)}{\partial x}= \frac{\sigma(\tilde H(x, y))}{\sigma (x)}  &\quad\quad \text{for $x\in \bigcup\limits_{i=0}^{n} (x_i,x_{i+1})$}.
\end{align}.

In the following, we show that $\tilde H(x,y)$ is continuously differentiable with respect to $x$ for $x=x_1,x_2,\cdots,x_n$.

First, we see from (\ref{s1_37}) that $\tilde H(x,y)$ is continuous at  $x=x_1,x_2,\cdots,x_n$. Since $\sigma$ is analytic and $\frac{\partial \tilde H(x,y)}{\partial y}=\sigma(\tilde H(x,y))$, by Taylor expansion with respect to $y$ at $y=0$ and $x\ne x_i$ ($i=1, 2, \cdots, n$), we have
\begin{align}\label{a1_3}
\tilde H(x,y) =  x + \sigma(x) y + \frac{1}{2!}  \sigma(x) \left(\frac{{\rm d}}{{\rm d}x} \sigma(x)\right)   y^2 + \frac{1}{3!}  \sigma(x) \left(\frac{{\rm d}}{{\rm d}x} \left(\sigma(x) \frac{{\rm d}}{{\rm d}x} \sigma(x)\right)\right)  y^3 + \cdots\;
\end{align}
and
\begin{align}\label{a1_3a}
\sigma(\tilde H(x,y)) =  \sigma(x) +  \sigma(x) \left(\frac{{\rm d}}{{\rm d}x} \sigma(x)\right)   y  + \frac{1}{2!}  \sigma(x) \left(\frac{{\rm d}}{{\rm d}x} \left(\sigma(x) \frac{{\rm d}}{{\rm d}x} \sigma(x)\right)\right)  y^2 + \cdots\;.
\end{align}
By using (\ref{a1_3}), we get
\begin{align}\label{a1_4}
\frac{\partial  \tilde H(x,y)}{\partial x}\bigg |_{x=x_i}& =  \lim_{x\to x_i} \frac{\tilde H(x,y) - \tilde H(x_i,y)}{x-x_i}\nonumber\\ &=\lim_{x\to x_i} \frac{\tilde H(x,y) -x_i}{x-x_i} = \sum _{k=0}^{\infty} \frac{1}{k!} \Phi_k(x_i) y^k,
\end{align}
where $\Phi_k(x_i)$ is defined in (\ref{a1_5}), and the convergence of the infinite series can be checked straightforwardly by using the fact that $\sigma$ is Lipshitz continuous.  (\ref{a1_4}) indicates that $\tilde H(x,y)$ is differentiable at $x = x_i$ ($i=1, 2, \cdots, n$).

It follows from (\ref{a1_3a}) that
\begin{align}\label{s1_41b}
\lim _{x \to x_i} \frac{\sigma(\tilde H(x, y))}{\sigma (x)}  = \sum _{k=0}^{\infty} \frac{1}{k!} \Phi_k(x_i) y^k.
\end{align}
It follows from (\ref{a1_3b}),  (\ref{a1_4}) and (\ref{s1_41b}) that $\tilde H(x,y)$ is continuously differentiable with respect to $x$.
\end{proof}

\begin{proposition}\label{proposition2}
Suppose the assumption H1 holds,  $f$ is differentiable, $\sigma$ is Lipschitz continuous, analytic, and has $n$ zeros $\{x_i\}_{i=1,2,\cdots,n}$ as defined in (\ref{s1_35}), then the probability density function $p(x,t)$ for the solution $X(t)$ of the SDE (\ref{s1_5}) satisfies the following equation
 \begin{align}\label{s1_42}
  \frac{\partial p(x,t)}{\partial t}& =-  \frac{\partial}{\partial x} \left[\left(f(x)  + b\sigma (x)   + \frac{A}{2}   \sigma  (x) \sigma'(x)\right) p(x,t) \right] + \frac{A}{2}   \frac{\partial^2}{\partial x^2}\left[\sigma ^2(x)p(x,t)\right]\nonumber\\
    &+ \int_{{\mathbf R}\backslash \{0\}}     \left[ \frac{\partial \tilde H(x,-y)}{\partial x} p(\tilde H(x, -y),t) \right.\nonumber\\
 &\quad\quad\quad\quad\quad\quad\quad\quad\quad \left. - p(x,t) +   y \,{\mathbf I}_{(-1,\;1)} (y)  \frac{\partial}{\partial x}(\sigma(x) p(x,t)) \right]\nu({\rm d} y),
\end{align}
where $\tilde H(x,y)$ is defined in (\ref{s1_39}) and (\ref{s1_36}), and  $\frac{\partial \tilde H(x,y)}{\partial x}$ is given in (\ref{s1_41c}).
\end{proposition}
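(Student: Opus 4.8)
The plan is to rerun the proof of Proposition~\ref{proposition1} with the single global transform $H$ replaced by the piecewise transform $\tilde H$, using Lemma~\ref{lemma4} to absorb the zeros of $\sigma$. The first ingredient is to identify the Marcus flow with $\tilde H$: because $\sigma$ is Lipschitz, the ODE (\ref{s1_8}) has a unique solution, so its flow $\xi(r,\cdot)$ cannot cross any equilibrium point $x_i$; hence $\xi(r,x)=H_i^{-1}(H_i(x)+r)$ whenever $x\in(x_i,x_{i+1})$, while $\xi(r,x_i)=x_i$, so that $\xi(r,x)=\tilde H(x,r)$ for every $x\in\mathbb{R}$ and $r\in\mathbb{R}$. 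Substituting this identity into the L\'evy--It\^o form (\ref{s1_9}) of the SDE produces the exact analogue of (\ref{s1_13}) with $H^{-1}(H(X(t-))+y)$ everywhere replaced by $\tilde H(X(t-),y)$.

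Next I would apply It\^o's formula to $\phi(X(t))$ for $\phi\in C_0^\infty(\mathbb{R})$, take expectations, and rearrange exactly as in (\ref{s1_14})--(\ref{s1_19}) (using the martingale identities (\ref{s1_16})--(\ref{s1_18}) and combining the $|y|\ge1$ and $|y|<1$ jump terms). This gives the weak identity
\begin{align*}
\int_{-\infty}^{\infty}\phi(x)\,\frac{\partial p(x,t)}{\partial t}\,{\rm d}x
&=\int_{-\infty}^{\infty}\phi'(x)\Big(f(x)+b\sigma(x)+\tfrac{A}{2}\sigma(x)\sigma'(x)\Big)p(x,t)\,{\rm d}x
+\tfrac{A}{2}\int_{-\infty}^{\infty}\phi''(x)\sigma^2(x)p(x,t)\,{\rm d}x\\
&\quad+\int_{-\infty}^{\infty}{\rm d}x\int_{{\mathbf R}\backslash\{0\}}\big[\phi(\tilde H(x,y))-\phi(x)-\phi'(x)\sigma(x)\,y\,{\mathbf I}_{(-1,1)}(y)\big]\,p(x,t)\,\nu({\rm d}y),
\end{align*}
whose first two integrals are handled by the integration by parts of (\ref{s1_20})--(\ref{s1_21}).

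The heart of the matter is the change of variables, carried out globally in (\ref{s1_25})--(\ref{s1_28}), which must now be done interval by interval. Fixing $y$, on each $(x_i,x_{i+1})$ I set $z=\tilde H(x,y)=H_i^{-1}(H_i(x)+y)$, so that $x=\tilde H(z,-y)$ with Jacobian ${\rm d}x/{\rm d}z=\partial_z\tilde H(z,-y)$; by part~(iv) of the lemma on the transforms $H_i$ this is an order-preserving bijection of $(x_i,x_{i+1})$ onto itself, and the boundary relations (\ref{s1_37}) show the pieces fit together. Since $\bigcup_i(x_i,x_{i+1})$ differs from $\mathbb{R}$ only by the finite, hence Lebesgue-null, set $\{x_1,\dots,x_n\}$, summing over $i$ yields
\[
\int_{-\infty}^{\infty}\phi(\tilde H(x,y))\,p(x,t)\,{\rm d}x=\int_{-\infty}^{\infty}\phi(x)\,\frac{\partial\tilde H(x,-y)}{\partial x}\,p(\tilde H(x,-y),t)\,{\rm d}x,
\]
the integrand on the right being continuous in $x$ by Lemma~\ref{lemma4}. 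Integrating the $\phi'(x)\sigma(x)y$ term by parts as in (\ref{s1_29}), substituting back, interchanging integrals, and invoking Lemma~\ref{lemma2} together with Assumption~H1 then delivers (\ref{s1_42}).

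The Fubini interchanges used above are justified as in (\ref{s1_23}); the only genuinely new point is a uniform $O(y^2)$ bound on $\phi(\tilde H(x,y))-\phi(x)-\phi'(x)\sigma(x)y$ for $x$ in the support of $\phi$ and $|y|<1$, which follows from $\partial_y\tilde H(x,0)=\sigma(x)$, from the boundedness of $\partial_y^2\tilde H$ on compact sets (analyticity plus the Lipschitz bound), and from a Gronwall estimate keeping $\tilde H(x,y)$ in a fixed compact set. I expect the main obstacle to be precisely the change of variables across the zeros: it rests on knowing that the Marcus flow never crosses a zero (so that $\tilde H$, and not some branch-switching map, is the correct substitution), that $x\mapsto\tilde H(x,y)$ is a bijection of each interval onto itself with the matching boundary behaviour (\ref{s1_37}), and that the resulting Jacobian $\partial_x\tilde H(x,-y)$ is continuous through the zeros --- the content of Lemma~\ref{lemma4}, on which the Fubini bound near $y=0$ also relies.
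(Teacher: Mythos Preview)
Your proposal is correct and follows essentially the same route as the paper: identify the Marcus flow with $\tilde H$ via (\ref{s1_38})--(\ref{s1_40}), then rerun the proof of Proposition~\ref{proposition1} with $H^{-1}(H(\cdot)+y)$ replaced by $\tilde H(\cdot,y)$, invoking Lemma~\ref{lemma4} for the continuity of $\partial_x\tilde H$. The paper's own proof is very terse and simply asserts the substitution goes through; your interval-by-interval change of variables and the explicit $O(y^2)$ bound for the Fubini step are exactly the details one needs to make that assertion rigorous, but they do not constitute a different approach.
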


\begin{proof}[Proof of Proposition \ref{proposition2}]

The proof follows the same steps as the proof of Proposition \ref{proposition1} presented in subsection 2.1. Here, we only state the difference.

When $\sigma$ has $n$ zeros as defined in (\ref{s1_35}), equation (\ref{s1_12}) in subsection 2.1 now changes to
\begin{align}\label{s1_38}
\begin{cases}
\xi(\Delta L(t),  X(t-)) = H_i^{-1} \big(H_i (X(t-)) + \Delta L(t)\big),  &\quad\quad\text{for $X(t-) \in (x_i, x_{i+1})$},\\
\xi(\Delta L(t),   X(t-))= X(t-).  &\quad\quad \text{for $X(t-) = x_1, x_2, \cdots, x_n$}.
\end{cases}
\end{align}
With the help of $\tilde H$ defined in (\ref{s1_39}),  (\ref{s1_38}) can be written as
\begin{align}\label{s1_40}
\xi(\Delta L(t), X(t-)) = \tilde H(X(t-), \Delta L(t)).
\end{align}

Comparing $H^{-1}(H(x)+y))$ used  in the case $\sigma$ being nonzero with $\tilde H(x, y)$, one can see that they are both continuously differential with respect to $x$, and the role of $H^{-1}(H(x)+y))$ in the proof of Proposition 1 can now be replaced completely by $\tilde H(x, y)$.  Replacing $H^{-1}(H(x)+y))$ in the proof of Proposition \ref{proposition1} with $\tilde H(x, y)$, we arrive at the equation (\ref{s1_42}).
 \end{proof}

\section{Examples}
In this section, we present a couple of simple examples to illustrate the results we have obtained in Proposition \ref{proposition2}.

\subsection{Example 1}

Let $\sigma(x)=x$, and $L(t)$ be the $\alpha$-stable L\'{e}vy motion with the triplet $b=1$, $A=0$ and $\nu({\rm d}x)=\frac{ {\rm d}x}{|x|^{1+\alpha}} $. For more details about $\alpha$-stable L\'{e}vy motion, see \cite{Sato1999} among others. Then the SDE (\ref{s1_5}) becomes
\begin{equation}\label{exmp1_1}
{\rm d}X(t)=f(X(t)){\rm d}t+X(t)\diamond {\rm d}L(t),
\end{equation}
According to (\ref{s1_36}),
\begin{align}\label{exmp1_2}
H_0(x)=\ln \frac{x}{a_0} &&\text{for $x\in (-\infty, 0)$},
\end{align}
and
\begin{align}\label{exmp1_3}
H_1(x)=\ln \frac{x}{a_1} &&\text{for $x\in (0, +\infty)$},
\end{align}
where $a_0$ and $a_1$ are any given constants satisfying $a_0 \in (-\infty,0)$ and $a_1 \in (0, +\infty)$.
It follows from (\ref{exmp1_2}) and (\ref{exmp1_3})  that
\begin{align}\label{exmp1_4}
H_0^{-1}(x)=a_0 e^x &&\text{for $x\in (-\infty, 0)$},
\end{align}
and
\begin{align}\label{exmp1_5}
H_1^{-1}(x)=a_1 e^x &&\text{for $x\in (0, +\infty)$}.
\end{align}
Therefore,
\begin{align}\label{exmp1_6}
H_0^{-1}(H_0(x)+y)=xe^y &&\text{for $x\in (-\infty, 0)$},
\end{align}
and
\begin{align}\label{exmp1_7}
H_1^{-1}(H_1(x)+y)=x e^y &&\text{for $x\in (0, +\infty)$}.
\end{align}
With (\ref{exmp1_6}) and (\ref{exmp1_7}), $\tilde H$ defined by (\ref{s1_39}) becomes
\begin{align}\label{exmp1_8}
\tilde H(x,y)=\begin{cases} xe^y &\quad\quad \text{for $x \ne 0$},\\0 &\quad\quad \text{for $x=0$}.\end{cases}
\end{align}
Namely,
\begin{align}\label{exmp1_9}
\tilde H(x,y)=  xe^y.
\end{align}
Therefore, according to (\ref{s1_42}) in Proposition \ref{proposition2},  Fokker-Planck equation for (\ref{exmp1_1}) can be expressed as
 \begin{align}\label{exmp1_10}
  \frac{\partial p(x,t)}{\partial t}& =-  \frac{\partial}{\partial x} \left[f(x)p(x,t) + xp(x,t)  \right]\nonumber\\& + \int_{{\mathbf R}\backslash \{0\}}     \left[ e^{-y} p(xe^{-y},t)
 - p(x,t) +   y \,{\mathbf I}_{(-1,\;1)} (y)  \frac{\partial}{\partial x}(x p(x,t)) \right]\frac{  {\rm d} y}{|y|^{1+\alpha}}.
\end{align}

\subsection{Example 2}
All the parameter in this example are the same as those in Example 1 in the previous subsection except that a combined Gaussian and white noise is used in (\ref{exmp1_1})  instead of $\alpha$-stable white noise. A combined Gaussian and Poisson white noise is corresponding to a L\'evy process which consists of two components: (i) Brownian motion; (ii) compound Poisson process, and can be expressed as
\begin{align}\label{exmp2_1}
L(t)=B(t) + \sum\limits_{i=0}^{N(t)}  r_i,
\end{align}
where $B(t)$ is a standard scalar Brownian motion with variance matrix $\tilde A$, $N(t)$ ($t>0$) is a Poisson process with intensity parameter $\lambda$, $r_i$ ($i=1,2,\cdots$) are i.i.d random numbers, with probability density function  $\mu(x)$, which are also independent of $N(t)$.  The L\'evy process expressed in (\ref{exmp2_1}) has a triplet as $b=-\lambda \int_{|y|<1} y\mu({{\rm d}y})$, $A=\tilde A$, and $\nu({\rm d} x) = \lambda \mu({\rm d} y)$ \cite{Applebaum2009}.

Same as Example 1,  we get the following Fokker-Planck equation for SDE (\ref{exmp1_1})
 \begin{align}\label{exmp2_2}
  \frac{\partial p(x,t)}{\partial t}& =-  \frac{\partial}{\partial x} \left[\left(f(x)+x+\frac{\tilde A}{2} x\right) p(x,t)  \right]+\frac{\tilde A}{2} \left(\sigma^2(x) p(x,t)\right)\nonumber\\& +\lambda \int_{{\mathbf R}\backslash \{0\}}     \left[ e^{-y} p(xe^{-y},t)
 - p(x,t) +   y \,{\mathbf I}_{(-1,\;1)} (y)  \frac{\partial}{\partial x}(x p(x,t)) \right]\mu({\rm d}y).
\end{align}

\section{Conclusion}
We derived Fokker-Planck equations for one-dimensional stochastic systems modeled by Marcus SDEs driven by L\'{e}vy processes.  The main results are summarized as in  Proposition 1  in subsection 2.1 and Proposition 2 in subsection 2.2.  
The derived Fokker-Planck equations are essentially non-local partial differential equations and may involve some singular integrals depending on the specific L\'{e}vy processes. It is a  challenging but important task to develop some efficient numerical methods for these Fokker-Planck equations, which will be left for our future research.

\section*{Acknowledgment}
This work is supported by  National Natural Science Foundation of China (No. 11531006).

\bibliographystyle{plain}

\end{document}